\theoremstyle{plain}
\newtheorem{thm}{\protect Theorem}[section]
\newtheorem{lemma}[thm]{\protect Lemma}
\newtheorem{remark}[thm]{\protect Remark}
\newtheorem{corollary}[thm]{\protect Corollary}
\newcommand{\card}{\operatorname{Card}}
\newcommand{\supp}{\operatorname{Supp}}
\newcommand{\diam}{\operatorname{Diam}}
\definecolor{linkcolor}{rgb}{0,0,0.502}
\definecolor{urlcolor}{rgb}{1,0,0}
\begin{document}

\title{Uniform decomposition of probability measures: \\
quantization, classification, rate of convergence.}

\author{Julien Chevallier\footnote{{\sf{e-mail: \href{mailto:julien.chevallier1@univ-grenoble-alpes.fr}{julien.chevallier1@univ-grenoble-alpes.fr}}}}\\
Univ. Grenoble Alpes, CNRS, LJK, 38000 Grenoble, France
}

\date{}

\maketitle

\begin{abstract}
The study of finite approximations of probability measures has a long history. In (Xu and Berger, 2017), the authors focus on constrained finite approximations and, in particular, uniform ones in dimension $d=1$. The present paper gives an elementary construction of a uniform decomposition of probability measures in dimension $d\geq 1$. This decomposition is then used to give upper-bounds on the rate of convergence of the optimal uniform approximation error. These bounds appear to be the generalization of the ones obtained in (Xu and Berger, 2017) and to be sharp for generic probability measures. 
\end{abstract}

\noindent\textit{Keywords}: Uniform approximation, Wasserstein distance, rate of convergence, quantization, classification. \smallskip\\
\textit{Mathematical Subject Classification}: 60E15, 62E17, 60B10, 60F99.

\section{Introduction}

Finding a good finite decomposition of a given probability measure $\rho$ on $\mathbb{R}^{d}$ is an extensively studied problem. \emph{Quantization} is concerned with the best finitely supported approximation of a probability measure (empirical measures being especially studied for \emph{classification}). The origins come from signal processing (optimal signal transmission through discretization) \cite{bennett1948spectra} but the range of application widened since then (pattern recognition \cite{faundez2011efficient}, numerical analysis \cite{pages2015introduction}, economics \cite{pages2010optimal}). The goodness of the approximation is usually measured in terms of an $L^{p}$-Wasserstein distance $W_{p}$ and numerous results are concerned with the rate of convergence of $e_{p,n}(\rho):=\inf W_{p}(\rho^{(n)},\rho)$ to 0 where the infimum is taken with respect to the set of measures $\rho^{(n)}$ supported by at most $n$ atoms \cite{graf2007foundations}.

\emph{Random empirical quantization} has recently attracted much attention \cite{boissard2014mean,bolley2007quantitative,fournier2014rate} in particular for its application to mean-field interacting particle systems. In that case, the approximating measure is $R^{(n)}=n^{-1}\sum_{k=1}^{n} \delta_{X_{k}}$ where the $X_{k}$'s are i.i.d. random variables distributed according to $\rho$ and the main results are concerned with rate of convergence of $\mathbb{E}\left[W_{p}(R^{(n)},\rho) \right]$ or concentration inequalities of the random variable $W_{p}(R^{(n)},\rho)$.\\

In that context, when the approximating measure is $\mu^{(n)}=n^{-1}\sum_{k=1}^{n} \delta_{x_{k}}$ with deterministic $x_{k}$'s, we use the term \emph{deterministic empirical quantization}. This kind of approximation is used for instance when considering mean-field limits with spatial covariates used to weight the interactions between particles \cite{chevallier2016spatial}. The case of dimension $d=1$ is extensively adressed in \cite{xu2017best} (the study highly relies on the connection between Wasserstein distances and the quantile function which is specific to $d=1$). The aim of the present paper is to generalize some of the results stated in \cite{xu2017best} to the general case $d\geq 1$. The main result gives sharp bounds on the rate of convergence of $\tilde{e}_{p,n}(\rho):=\inf W_{p}(\mu^{(n)},\rho)$ to 0 where the infimum is taken with respect to the set of deterministic empirical measures $\mu^{(n)}$ supported by $n$ atoms. The rate of convergence depends on the dimension $d$ and the order $p$ and shows a transition: it is either the same as for standard quantization (when Lebesgue measure is harder to approximate) or strictly worse (when disconnected measures are harder to approximate).

The paper is organized as follows. Definitions and notation are given in Section \ref{sec:def:not:results} with a list of previous results found in the literature. Then, Section \ref{sec:main:result} contains an elementary uniform decomposition of probability measures (Theorem \ref{thm:decomposition}) which is used to obtain upper-bounds on deterministic empirical quantization rates (Theorem \ref{thm:rate:bounded:support}) and uniform classification rates (Corollary \ref{cor:classification}).

\section{Notation and previous results}
\label{sec:def:not:results}

The space $\mathbb{R}^{d}$ is equipped with the maximum norm $||.||$ and the balls centered at $0$ are denoted by $\mathcal{B}_{r}:=B(0,r)=[-r,r]^{d}$ for all $r\geq 0$. The diameter of a subset $A$ of $\mathbb{R}^{d}$ is denoted by $\diam(A):=\sup_{x,y\in A} ||x-y||$. The space of every Borel measures (\emph{resp.} probability measures) on $\mathbb{R}^{d}$ is denoted by $\mathcal{M}(\mathbb{R}^{d})$ (\emph{resp.} $\mathcal{P}(\mathbb{R}^{d})$). For $\nu$ in $\mathcal{M}(\mathbb{R}^{d})$, $\supp(\nu)$ and $|\nu|:=\nu(\mathbb{R}^{d})$ respectively denote the support and the mass of the measure $\nu$. For a collection of $n$ positions $x_{1},\dots,x_{n}$ in $\mathbb{R}^{d}$, we denote its associated empirical measure by $\mu^{(n)}:=n^{-1}\sum_{k=1}^{n}\delta_{x_{k}}$. 

For every $p\geq 1$, the set of probability measures $\rho$ such that $\int ||x||^{p} \rho(dx)<+\infty$ is denoted by $\mathcal{P}_{p}$. Then the Wasserstein distance of order $p$ is denoted by $W_{p}$ and defined by, for all $\rho$ and $\mu$ in $\mathcal{P}_{p}$,
\begin{equation*}
W_{p}(\rho,\mu):= \left(\inf_{\pi} \int_{(\mathbb{R}^{d})^{2}} ||x-y||^{p} \pi(dx,dy)\right)^{1/p},
\end{equation*}
where the infimum is taken with respect to every couplings $\pi$ of the two measures $\rho$ and $\mu$.

\subsection{State of the art}

Given $\rho$ in $\mathcal{P}_{p}$ the \emph{optimal quantization error} of order $p$ is defined as 
\begin{equation*}
e_{p,n}(\rho):=inf_{\rho^{(n)}} W_{p}(\rho^{(n)},\rho),
\end{equation*}
where the infimum is taken with respect to the set of measures $\rho^{(n)}$ supported by at most $n$ atoms. The literature dealing with the rate of convergence of $e_{p,n}(\rho)$ to 0 is extensive \cite{graf2007foundations,kloeckner2012approximation,zador1963development,zador1982asymptotic}. One of the most celebrated result is due to Zador \cite[Theorem 6.2]{graf2007foundations}. A consequence says that if $\rho$ is in $\mathcal{P}_{q}$ for some $q>p$ and admits a non trivial absolutely continuous part then $e_{p,n}(\rho)$ goes to 0 as $n^{-1/d}$.

Given $\rho$ in $\mathcal{P}_{p}$ the \emph{random empirical quantization error} of order $p$ is given by
\begin{equation*}
E_{p,n}(\rho):=W_{p}(R^{(n)},\rho),
\end{equation*}
where $R^{(n)}=n^{-1}\sum_{k=1}^{n} \delta_{X_{k}}$ is the empirical measure associated with the i.i.d. random variables $X_{k}$ which are distributed according to $\rho$. Let us mention here a result stated in \cite[Theorem 1]{fournier2014rate}: if $\rho$ is in $\mathcal{P}_{q}$ for some $q$ large enough then $\mathbb{E}\left[E_{p,n}(\rho)^{p} \right]^{1/p}$ goes to 0 as $n^{-1/2p}$ or $n^{-1/d}$ depending on the values of $p$ and $d$ (to be precise, an additional logarithmic term appears at the transition $p=d/2$). The rate $n^{-1/2p}$ comes from the fluctuations in the law of large numbers and the rate $n^{-1/d}$ comes from standard quantization as stated above.

Given $\rho$ in $\mathcal{P}_{p}$ the \emph{optimal deterministic empirical quantization error} of order $p$ is given by
\begin{equation*}
\tilde{e}_{p,n}(\rho):=inf_{\mu^{(n)}} W_{p}(\mu^{(n)},\rho),
\end{equation*}
where the infimum is taken with respect to the set of deterministic empirical measures $\mu^{(n)}$ supported by $n$ atoms. Up to our knowledge, the rate of convergence of $\tilde{e}_{p,n}(\rho)$ is known in dimension $d=1$ only and reads as follows.
\begin{thm}[\text{\cite[Theorem 5.20 and Remark 5.21]{xu2017best}}]\label{thm:xu:berger}
Let $p\geq 1$ and $d=1$.
\begin{enumerate}[(i)]
\item If $\rho\in\mathcal{P}_{q}$ with $q>p$ then $\tilde{e}_{p,n}(\rho)=o(n^{1/q-1/p})$.
\item If $\supp(\rho)$ is bounded then the rate of convergence of $\tilde{e}_{p,n}(\rho)$ is upper-bounded by $n^{-1/p}$. Furthermore, if the support of $\rho$ is disconnected then the rate $n^{-1/p}$ is sharp.
\end{enumerate}
\end{thm}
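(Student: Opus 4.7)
My plan rests on the dimension-one quantile representation
$$W_p(\rho,\mu)^p = \int_0^1 |F_\rho^{-1}(u) - F_\mu^{-1}(u)|^p\, du,$$
where $F^{-1}$ denotes the quantile function. For an $n$-atom empirical measure $\mu^{(n)}$ with ordered atoms, $F_{\mu^{(n)}}^{-1}$ is a step function taking constant values on the intervals $I_k := ((k-1)/n, k/n]$. Hence $\tilde{e}_{p,n}(\rho)^p$ equals the best $L^p$-approximation error of the non-decreasing function $F_\rho^{-1}$ by step functions on the fixed partition $\{I_k\}_{k=1}^n$. Taking $x_k := F_\rho^{-1}((k-1)/n)$ and setting $D_k := F_\rho^{-1}(k/n) - F_\rho^{-1}((k-1)/n) \geq 0$, monotonicity yields the basic estimate $\tilde{e}_{p,n}(\rho)^p \leq n^{-1}\sum_{k=1}^n D_k^p$.

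For the upper bound in (ii), bounded support gives $\sum_k D_k \leq \diam(\supp\rho)$, and convexity of $t\mapsto t^p$ for $p \geq 1$ implies $\sum_k D_k^p \leq (\sum_k D_k)^p$; hence $\tilde{e}_{p,n}(\rho) \leq \diam(\supp\rho)\, n^{-1/p}$. For sharpness, decompose $\supp\rho = S_1 \sqcup S_2$ with $\gamma := \operatorname{dist}(S_1, S_2) > 0$ and $m := \rho(S_1) \in (0,1)$. For any $\mu^{(n)}$, if $j$ counts the atoms on the $S_1$-side of the gap, the excess mass $|m - j/n|$ must be transported across the gap, so $W_p(\mu^{(n)}, \rho)^p \geq |m - j/n|\,\gamma^p$. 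Minimizing over integer $j$ gives $\tilde{e}_{p,n}(\rho)^p \geq \gamma^p\,\operatorname{dist}(nm,\mathbb{Z})/n$, and since $\operatorname{dist}(nm,\mathbb{Z})$ remains bounded below along some $n_j \to \infty$ (trivially if $m$ is irrational, by equidistribution; otherwise along $n$ not multiples of the denominator of $m$), one concludes $\limsup_n n^{1/p}\tilde{e}_{p,n}(\rho) > 0$.

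For (i), the plan is truncation combined with Hölder. Pick $R = R_n \to \infty$ with $R_n = o(n^{1/q})$ and split the intervals $I_k$ into a bulk (where $F_\rho^{-1}$ remains in $[-R_n, R_n]$) and a tail. The bulk contribution is $\leq (2R_n)^p/n = o(n^{p/q - 1})$ by the convexity argument of (ii). For the tail, Hölder's inequality with exponents $q/p$ and $q/(q-p)$, applied to $\int_{\text{tail}}|F_\rho^{-1}|^p\,du$ and combined with the Markov bound $\rho([-R,R]^c) \leq A_R/R^q$ where $A_R := \int_{|x|>R}|x|^q\,d\rho$, yields $\int_{\text{tail}}|F_\rho^{-1}|^p\,du \leq A_{R_n}\, R_n^{p-q}$. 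A careful choice of tail atoms (e.g.\ $x_k := F_\rho^{-1}(\alpha_k)$ for $\alpha_k$ the midpoint of $I_k$) converts this into a tail approximation error of the same order. Since $A_{R_n} \to 0$, one tunes $R_n = n^{1/q}/\omega_n$ with $\omega_n \to \infty$ slowly enough that $A_{R_n}\omega_n^{q-p} \to 0$, making both contributions $o(n^{p/q - 1})$ and producing $\tilde{e}_{p,n}(\rho) = o(n^{1/q - 1/p})$.

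The main obstacle is the tail estimate in (i): converting the moment bound $\int|x|^q\,d\rho < \infty$ into a summable contribution from the extremal $D_k^p$ while extracting the ``$o$'' (rather than ``$O$''). The decisive quantitative input is the vanishing of $A_R$ as $R\to\infty$, which is what permits the slow fudge factor $\omega_n$ and pushes the rate from $O$ to $o$. The sharpness argument in (ii), by contrast, is conceptually transparent but hinges on the mild diophantine observation that $\operatorname{dist}(nm,\mathbb{Z})$ does not vanish along every subsequence.
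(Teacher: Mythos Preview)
The paper does not prove this theorem: it is quoted from \cite{xu2017best} as background, and the paper's contribution is the $d\ge 1$ generalization via the uniform decomposition of Theorem~\ref{thm:decomposition}. So there is no proof in the paper to compare against directly. That said, specializing the paper's own results to $d=1$ does yield an alternative argument for most of the statement: Theorem~\ref{thm:rate:bounded:support}(iii) gives the upper bound in (ii) for $p>1$, and Corollary~\ref{cor:rate:unbounded:support} gives exactly the $o(n^{1/q-1/p})$ of (i). Your quantile-function route is the $d=1$--specific method (presumably close to Xu--Berger's original); it is sharper at $p=d=1$ (your inequality $\sum_k D_k^p\le(\sum_k D_k)^p$ avoids the logarithm that the decomposition method incurs there) but does not extend to higher dimension. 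The sharpness half of (ii) is not touched by the paper's techniques at all; your gap-crossing lower bound via $\operatorname{dist}(nm,\mathbb Z)$ is the standard and correct argument.

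Your argument is essentially correct. The one place that deserves tightening is the tail step in (i): the sentence ``a careful choice of tail atoms converts this into a tail approximation error of the same order'' is where the work hides, and the midpoint choice does not obviously do it. The clean fix is to place the tail atoms at $0$; then the tail contribution to $W_p^p$ is literally $\int_{\{|F_\rho^{-1}|>R_n\}}|F_\rho^{-1}(u)|^p\,du=\int_{|x|>R_n}|x|^p\,\rho(dx)\le A_{R_n}R_n^{p-q}$ (no H\"older needed, just $|x|^{p-q}\le R_n^{p-q}$ for $|x|>R_n$), and the at most two boundary intervals where $F_\rho^{-1}$ crosses $\pm R_n$ contribute an extra $O(R_n^p/n)$, which is absorbed into the bulk. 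Equivalently---and this is exactly what the paper does in Corollary~\ref{cor:rate:unbounded:support}---bypass the interval bookkeeping by defining $\rho^{(R)}:=\rho\mathbf 1_{\mathcal B_R}+(1-\rho(\mathcal B_R))\delta_0$, applying part (ii) to $\rho^{(R)}$, and using $W_p(\rho,\rho^{(R)})^p\le A_R R^{p-q}$ with the triangle inequality. Either way your choice $R_n=n^{1/q}/\omega_n$ and the slow-divergence selection of $\omega_n$ then finish the proof.
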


Combining the results of the standard quantization and deterministic empirical quantization we expect that for some generic $\rho$ with bounded support in dimension $d\geq 1$, the rate of $\tilde{e}_{p,n}(\rho)$ is given by $\max(n^{-1/d},n^{-1/p})$ (which is sharp when $\supp(\rho)$ is disconnected). This is what is shown in Theorem \ref{thm:rate:bounded:support} below (up to a logarithmic term at the transition $p=d$). Moreover, the generalization of Theorem \ref{thm:xu:berger}.$(i)$ to $d\geq 1$ is obtained in Corollary \ref{cor:rate:unbounded:support}.

\section{Main results}
\label{sec:main:result}

This section begins with a technical lemma which is used to control diameters in our construction of a uniform decomposition of probability measures (which is then given in Theorem \ref{thm:decomposition}).

\begin{lemma}\label{lem:construction:cube}
Let $r\geq 0$, $n\geq 1$ and $\nu$ be in $\mathcal{M}(\mathbb{R}^{d})$ with support included in $\mathcal{B}_{r}$ and total mass $|\nu|\geq 1/n$. There exists a subset $A$ of $\mathcal{B}_{r}$ such that $\nu(A)\geq 1/n$ and $\diam(A)\leq 4r(n|\nu|)^{-1/d}$.
\end{lemma}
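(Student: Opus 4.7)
The plan is to exploit the fact that $\mathcal{B}_r = [-r,r]^d$ is a cube and, under the maximum norm, a cube of side $\ell$ has diameter exactly $\ell$. So I want to tile $\mathcal{B}_r$ by equal subcubes and use a pigeonhole argument on $\nu$.

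More precisely, for an integer $k\geq 1$, I partition $\mathcal{B}_r$ into $k^d$ axis-aligned subcubes of side $2r/k$ (each of diameter $2r/k$). Since these subcubes cover $\mathcal{B}_r\supseteq \supp(\nu)$, the pigeonhole principle produces a subcube $A$ with
\[
\nu(A)\geq \frac{|\nu|}{k^d}.
\]
I need two things from $k$: first, $\nu(A)\geq 1/n$, which holds as soon as $k^d\leq n|\nu|$; second, $\diam(A)=2r/k\leq 4r(n|\nu|)^{-1/d}$, which holds as soon as $(n|\nu|)^{1/d}\leq 2k$.

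Both requirements are met by the choice $k:=\lfloor (n|\nu|)^{1/d}\rfloor$. Indeed, the hypothesis $|\nu|\geq 1/n$ gives $(n|\nu|)^{1/d}\geq 1$, hence $k\geq 1$ and in particular $k+1\leq 2k$. The lower inequality $k\leq (n|\nu|)^{1/d}$ gives $k^d\leq n|\nu|$, and the upper inequality $(n|\nu|)^{1/d}<k+1\leq 2k$ gives the diameter bound. So the main step is really just balancing these two floor-function estimates; no serious obstacle is expected.

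The only mildly degenerate situation is $r=0$, in which $\mathcal{B}_r=\{0\}$, $\nu=|\nu|\delta_0$, and one may simply take $A=\{0\}$, for which $\nu(A)=|\nu|\geq 1/n$ and $\diam(A)=0$ trivially satisfy the conclusion. So the argument is: dispose of $r=0$ by hand, then for $r>0$ run the pigeonhole on the subcube tiling of side $2r/k$ with $k=\lfloor (n|\nu|)^{1/d}\rfloor$, and read off both conclusions from the two inequalities characterising the floor.
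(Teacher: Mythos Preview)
Your proof is correct and essentially identical to the paper's: both partition $\mathcal{B}_r$ into $k^d$ subcubes with $k=\lfloor(n|\nu|)^{1/d}\rfloor$ and apply the pigeonhole principle. The only cosmetic differences are that the paper phrases pigeonhole as a contradiction argument and obtains the diameter bound $(n|\nu|)^{1/d}\leq 2k$ via a case split on whether $(n|\nu|)^{1/d}\geq 2$, whereas your chain $(n|\nu|)^{1/d}<k+1\leq 2k$ is slightly cleaner.
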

\begin{proof}
Consider for any $r'\geq 0$ the maximal mass over balls of radius $r'$, namely
\begin{equation*}
m(r')=\sup \left\{ \nu(B(x,r')) :  x\in \mathbb{R}^{d} \right\}.
\end{equation*}
We prove by contradiction that
\begin{equation}
m(r \lfloor \left(n|\nu|\right)^{1/d} \rfloor^{-1})\geq 1/n.
\end{equation}
Assume that the $\nu$-mass of any ball of radius equal to $r \lfloor \left(n|\nu|\right)^{1/d} \rfloor^{-1}$ is less than $1/n$. Yet there exists a covering of the ball $[-r,r]^{d}$ into $ \lfloor \left(n|\nu|\right)^{1/d} \rfloor^{d}$ disjoint smaller balls, each one of radius equal to $r \lfloor \left(n|\nu|\right)^{1/d} \rfloor^{-1}$ (the balls are cubes). This implies 
\begin{equation*}
|\nu| < \lfloor \left(n|\nu|\right)^{1/d} \rfloor^{d} N^{-1} \leq ((n|\nu|)^{1/d})^{d} n^{-1}= |\nu|
\end{equation*}
yielding a contradiction.

Hence we have proved that we can find a subset $A$ such that $\nu(A)\geq 1/n$ and $\diam(A)\leq 2 r \lfloor \left(n|\nu|\right)^{1/d} \rfloor^{-1}$. The stated result then follows from
\begin{equation*}
r \lfloor \left(n|\nu|\right)^{1/d} \rfloor^{-1}\leq 2r (n|\nu|)^{-1/d}
\end{equation*}
(treat separately the two cases $\left(n|\nu|\right)^{1/d}\geq 2$ and $\left(n|\nu|\right)^{1/d}<2$).
\end{proof}

\begin{thm}\label{thm:decomposition}
Let $r\geq 0$ and $\rho$ be in $\mathcal{P}(\mathbb{R}^{d})$ with support included in $\mathcal{B}_{r}$. For all $n\geq 1$, there exist $\rho_{1},\dots,\rho_{n}$ in $\mathcal{M}(\mathbb{R}^{d})$ and $A_{1},\dots,A_{n}$ subsets of $\mathcal{B}_{r}$ such that $\rho=\sum_{k=1}^{n} \rho_{k}$ and 
\begin{equation*}
\forall	k=1,\dots,n, \quad |\rho_{k}|=\frac{1}{n},\ \supp(\rho_{k})\subset A_{k} \text{ and } \diam(A_{k}) \leq 4rk^{-1/d}.
\end{equation*}
\end{thm}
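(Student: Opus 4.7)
The plan is to construct the pieces $\rho_k$ iteratively, in the \emph{decreasing} order $k = n, n-1, \ldots, 1$. The reason for this ordering is the following balance: if the residual measure happens to have mass exactly $k/n$ at the moment I extract the piece indexed by $k$, then Lemma \ref{lem:construction:cube} applied to that residual delivers a set of diameter at most $4r(n \cdot k/n)^{-1/d} = 4 r k^{-1/d}$, which is precisely the bound required on $A_k$. Extracting in the opposite order would make the lemma progressively weaker while the diameter constraint becomes tighter, so the matching of the two estimates forces this particular direction.

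Concretely, I would start with $\nu := \rho$ (of mass $n/n$) and maintain at each stage a nonnegative Borel measure supported in $\mathcal{B}_r$, of mass $k/n$ just before producing $\rho_k$. Applying Lemma \ref{lem:construction:cube} to this residual yields a set $A_k \subset \mathcal{B}_r$ with $\nu(A_k) \geq 1/n$ and $\diam(A_k) \leq 4r k^{-1/d}$. I would then set
\begin{equation*}
\rho_k := \frac{1}{n\, \nu(A_k)}\, \nu|_{A_k},
\end{equation*}
so that $|\rho_k| = 1/n$ and $\supp(\rho_k) \subset A_k$. Since the scalar factor lies in $(0,1]$, the new residual $\nu - \rho_k$ is still a nonnegative measure, of mass $(k-1)/n$, with support contained in $\mathcal{B}_r$. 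I then replace $\nu$ by $\nu - \rho_k$, decrement $k$, and iterate.

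After $n$ steps the residual has mass zero, and the identity $\rho = \sum_{k=1}^{n} \rho_k$ follows immediately. No serious obstacle arises in this scheme: Lemma \ref{lem:construction:cube} already contains all the geometric content, and what remains is essentially bookkeeping, namely checking that the subtraction preserves positivity (automatic from the scaling factor being at most $1$) and that the support stays inside $\mathcal{B}_r$ (automatic since each residual is dominated by $\rho$). The final step $k = 1$ is degenerate: the remaining mass is $1/n$, the scaling factor equals $1$, and $A_1$ can simply be taken as $\mathcal{B}_r$, whose diameter $2r$ is well below the loose bound $4r$.
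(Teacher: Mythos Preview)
Your proposal is correct and follows essentially the same iterative scheme as the paper: apply Lemma~\ref{lem:construction:cube} to the residual measure of mass $k/n$, peel off a rescaled piece $\rho_k$ of mass $1/n$ supported in $A_k$, and decrement $k$. Your write-up is in fact somewhat more explicit than the paper's on the bookkeeping (positivity of the residual, why the decreasing order is forced, the degenerate final step), but the method is the same.
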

The proof is based on an iterative construction: each iteration relies on Lemma \ref{lem:construction:cube}.
\begin{proof}
Applying Lemma \ref{lem:construction:cube} to $\rho$ gives the existence of a subset $A_{n}$ such that $\rho(A_{n})\geq 1/n$ and $\diam(A_{n})\leq 4 r n^{-1/d}$. Then, we define the measure
\begin{equation*}
\rho_{n}:= \frac{n^{-1}}{\rho(A_{n})} \rho \mathbf{1}_{A_{n}}.
\end{equation*}
In particular, $|\rho_{n}|=1/n$ and $\supp(\rho_{n})\subset A_{n}$. Applying Lemma \ref{lem:construction:cube} to $\tilde{\rho}=\rho-\rho^{n}$ (its total mass is $(n-1)/n$) gives a subset $A_{n-1}$ such that $\rho(A_{n-1})\geq 1/n$ and $\diam(A_{n-1})\leq 4 r (n-1)^{-1/d}$. Similarly we define $\rho_{n-1}:= \frac{n^{-1}}{\tilde{\rho}(A_{n-1})} \tilde{\rho} \mathbf{1}_{A_{n-1}}$. Finally, applying $n$ times the iterative step ends the proof.
\end{proof}

The decomposition stated above is then used to control the rate of convergence of the optimal deterministic empirical quantization error $\tilde{e}_{p,n}(\rho)$ by exhibiting a particular empirical measure with controlled approximation error. The bounded case is treated in Theorem \ref{thm:rate:bounded:support}, the unbounded case in Corollary \ref{cor:rate:unbounded:support} and finally an application to the classification issue (when $\rho$ is an empirical measure) is given in Corollary \ref{cor:classification}.

\begin{thm}\label{thm:rate:bounded:support}
Let $r\geq 0$ and $\rho$ be in $\mathcal{P}(\mathbb{R}^{d})$ with support included in $\mathcal{B}_{r}$. For all $n\geq 1$, there exist $x_{1},\dots,x_{n}$ in $\mathbb{R}^{d}$, with associated empirical measure $\mu^{(n)}=n^{-1}\sum_{k=1}^{n}\delta_{x_{k}}$, such that for all $p\geq 1$,
\begin{equation*}
W_{p}(\mu^{(n)},\rho) \leq 4r f_{p,d}(n),
\end{equation*}
where
\begin{enumerate}[(i)]
\item if $p<d$, then $f_{p,d}(n) := (\frac{d}{d-p})^{1/p} n^{-1/d}$;
\item if $p=d$, then $f_{p,d}(n) := (\frac{1+\ln n}{n})^{1/d}$;
\item if $p>d$, then $f_{p,d}(n) :=  \zeta(p/d)n^{-1/p}$, where $\zeta$ is the Zeta Riemann function.
\end{enumerate}
\end{thm}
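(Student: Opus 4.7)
My plan is to apply the uniform decomposition of Theorem \ref{thm:decomposition} to obtain $\rho = \sum_{k=1}^{n} \rho_{k}$ with $|\rho_{k}| = 1/n$, $\supp(\rho_{k}) \subset A_{k}$, and $\diam(A_{k}) \leq 4r k^{-1/d}$. For each index $k$ such that $\rho_{k}\neq 0$, I pick an arbitrary representative $x_{k}\in A_{k}$ (and any point otherwise), and form the empirical measure $\mu^{(n)} := n^{-1}\sum_{k=1}^{n}\delta_{x_{k}}$.

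The core of the argument is a one-line transport plan. I consider
\begin{equation*}
\pi := \sum_{k=1}^{n} \rho_{k}\otimes \delta_{x_{k}},
\end{equation*}
whose first marginal is $\sum_{k}\rho_{k} = \rho$ and whose second marginal is $\sum_{k} |\rho_{k}|\,\delta_{x_{k}} = \mu^{(n)}$, so $\pi$ is admissible. Since every $x\in\supp(\rho_{k})$ lies in the same set $A_{k}$ as $x_{k}$, the integrand $||x-x_{k}||^{p}$ is bounded by $\diam(A_{k})^{p} \leq (4r)^{p} k^{-p/d}$ on $A_{k}$, hence
\begin{equation*}
W_{p}(\mu^{(n)},\rho)^{p} \leq \sum_{k=1}^{n}\int_{A_{k}} ||x-x_{k}||^{p}\,\rho_{k}(dx) \leq \frac{(4r)^{p}}{n}\sum_{k=1}^{n} k^{-p/d}.
\end{equation*}

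The three cases then reduce to standard comparisons of the partial sum $\sum_{k=1}^{n} k^{-p/d}$ with an integral or with the Riemann zeta value. When $p<d$, the exponent $p/d$ lies in $[0,1)$, so using monotonicity of $t\mapsto t^{-p/d}$ to bound the sum by $\int_{0}^{n} t^{-p/d}\,dt = \frac{d}{d-p}n^{1-p/d}$ produces case (i). When $p=d$, the harmonic estimate $\sum_{k=1}^{n} k^{-1}\leq 1+\ln n$ produces case (ii). When $p>d$, the series is convergent and $\sum_{k=1}^{n} k^{-p/d}\leq \zeta(p/d)$ produces case (iii). Taking $p$-th roots and factoring out $4r$ yields the three announced inequalities.

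I do not anticipate any genuine obstacle: once Theorem \ref{thm:decomposition} is in hand, the statement is a direct application of a straightforward coupling followed by a sum-versus-integral estimate. The one subtlety worth flagging is, in case (i), to compare with $\int_{0}^{n}$ rather than $\int_{1}^{n}$ so that no extra additive constant pollutes the clean constant $(d/(d-p))^{1/p}$ claimed in the statement.
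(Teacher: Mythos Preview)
Your proof is correct and follows essentially the same approach as the paper: apply Theorem \ref{thm:decomposition}, place one atom in each $A_{k}$, couple each $\rho_{k}$ with $n^{-1}\delta_{x_{k}}$, and bound the resulting sum $\sum_{k=1}^{n} k^{-p/d}$ by the three standard estimates. The only cosmetic difference is that the paper takes $x_{k}$ to be the center of $A_{k}$ rather than an arbitrary point, but since both arguments bound $\|x-x_{k}\|$ by $\diam(A_{k})$ this changes nothing.
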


\begin{remark}
The rates show a transition between the rate for the approximation of a density $n^{-1/d}$ (standard quantization) and the rate for approximation of measures with disconnected support $n^{-1/p}$ (the simplest example being the sum of two Dirac masses, the interested reader is referred to \cite[Remark 5.21.(ii)]{xu2017best}). At the transition, our construction gives a rate with an additional logarithmic term. This may be an artefact of our too simple construction : this logarithmic term does not appear in dimension 1 for measures with bounded support - see \cite[Theorem 5.20.(ii)]{xu2017best}. However let us mention that such additional logarithmic term may appear for unbounded measures as highlighted in \cite[Example 5.8]{xu2017best}.
\end{remark}

\begin{proof}
Let $\rho_{1},\dots,\rho_{n}$ and $A_{1},\dots,A_{n}$ be respectively the measures and the subsets of $\mathcal{B}_{r}$ given by the decomposition of Theorem \ref{thm:decomposition}.
For each $k$, let $x_{k}$ denote the center of $A_{k}$ and let $\mu^{(n)}$ denote the associated empirical measure. We use the canonical coupling associated with the decomposition of $\rho$ into the $\rho_{k}$'s to control the Wasserstein distance. Namely,
\begin{equation*}
W_{p}(\mu^{(n)},\rho)^{p}\leq \sum_{k=1}^{n} W_{p}(n^{-1} \delta_{x_{k}},\rho_{k})^{p} \leq n^{-1} \sum_{k=1}^{n} \diam(A_{k})^{p} \leq \frac{(4r)^{p}}{n}\sum_{k=1}^{n} k^{-p/d}.
\end{equation*}
If $p>d$ then the sum is bounded by $\zeta(p/d)<+\infty$ and we obtain $(iii)$.
If $p<d$, then the sum is bounded by $\int_{0}^{n} t^{-p/d} dt = n^{1-p/d}/(1-p/d)$ which gives $(i)$.
If $p=d$, then the sum is bounded by $1+\ln n$ yielding $(ii)$.
\end{proof}

\begin{corollary}\label{cor:rate:unbounded:support}
Let $q\geq 1$ and $\rho\in \mathcal{P}_{q}$. For all $n\geq 1$, there exist $x_{1},\dots,x_{n}$ in $\mathbb{R}^{d}$, with associated empirical measure $\mu^{(n)}$, such that for all $p< q$,
\begin{equation*}
W_{p}(\mu^{(n)},\rho) = o(f_{p,d}(n)^{1-p/q})
\end{equation*}
where $f_{p,d}(n)$ is defined in Theorem \ref{thm:rate:bounded:support}.
\end{corollary}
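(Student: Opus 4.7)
The plan is to reduce to the bounded-support Theorem \ref{thm:rate:bounded:support} by truncating $\rho$. For $R>0$ with $\alpha_R := \rho(\mathcal{B}_R) > 0$, set $\rho^R := \alpha_R^{-1} \rho\mathbf{1}_{\mathcal{B}_R}$, a probability measure supported on $\mathcal{B}_R$. Applying Theorem \ref{thm:rate:bounded:support} to $\rho^R$ produces an empirical measure $\mu^{(n)}$ with $W_p(\mu^{(n)}, \rho^R) \leq 4R f_{p,d}(n)$. The remaining ingredients are a bound on the truncation error $W_p(\rho^R, \rho)$ and an optimal choice of the parameter $R=R_n$.

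For the truncation error I would use the explicit coupling that leaves the common $\alpha_R$-mass of $\rho^R$ on the diagonal and transports the remaining $(1-\alpha_R)$-mass of $\rho^R$ to $\rho\mathbf{1}_{\mathcal{B}_R^c}$ via a product measure. Using $||x||\leq R$ on $\supp(\rho^R)$, the convexity bound $||x-y||^p\leq 2^{p-1}(||x||^p+||y||^p)$, and the crude estimate $(1-\alpha_R)R^p\leq\int_{\mathcal{B}_R^c}||y||^p\rho(dy)$ (since $||y||\geq R$ off $\mathcal{B}_R$), one derives
\begin{equation*}
W_p(\rho^R, \rho)^p \leq 2^p \int_{\mathcal{B}_R^c} ||y||^p \rho(dy) \leq 2^p R^{p-q} M_q(R),
\end{equation*}
where $M_q(R) := \int_{\mathcal{B}_R^c} ||y||^q \rho(dy)\to 0$ as $R\to\infty$ by dominated convergence (thanks to $\rho\in\mathcal{P}_q$).

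Combining via the triangle inequality for $W_p$ gives $W_p(\mu^{(n)}, \rho) \leq 4R f_{p,d}(n) + 2 R^{(p-q)/p} M_q(R)^{1/p}$. I would equalize the two contributions by choosing $R_n$ as the (unique) solution of $2^p R_n^q f_{p,d}(n)^p = M_q(R_n)$, whose existence follows from the fact that the left-hand side is continuous and strictly increasing in $R$ from $0$ to $+\infty$ while $M_q$ is bounded and decreasing. This choice yields
\begin{equation*}
W_p(\mu^{(n)}, \rho) \leq 8 R_n f_{p,d}(n) = 2^{3-p/q} M_q(R_n)^{1/q} f_{p,d}(n)^{1-p/q}.
\end{equation*}

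The main obstacle is verifying $R_n\to\infty$ (assuming $\rho$ has unbounded support, else the bounded case of Theorem \ref{thm:rate:bounded:support} applies directly): if $R_n\leq K$ along a subsequence, for some $K$ with $M_q(K)>0$, then $M_q(R_n)\geq M_q(K)$ would force $R_n^q = M_q(R_n)/(2^p f_{p,d}(n)^p)\to\infty$ since $f_{p,d}(n)\to 0$, a contradiction. Hence $R_n\to\infty$, so $M_q(R_n)\to 0$, and the right-hand side displayed above is $o(f_{p,d}(n)^{1-p/q})$, as claimed.
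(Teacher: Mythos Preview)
Your proof is correct and follows essentially the same truncation-then-optimize strategy as the paper's; the only cosmetic differences are that the paper truncates by depositing the excess mass at $\delta_0$ (rather than renormalizing the restriction) and selects the radius explicitly via $\tilde r=f_{p,d}(n)^{-p/q}$, $r(n)=C(\tilde r)\tilde r$ (rather than through your implicit fixed-point equation). One small caveat: your equation $2^p R_n^q f_{p,d}(n)^p=M_q(R_n)$ need not have an exact solution when $M_q$ has jumps (i.e.\ when $\rho$ charges a sphere $\{\|y\|=R\}$), but defining $R_n:=\sup\{R:2^pR^qf_{p,d}(n)^p\le M_q(R)\}$ yields $M_q(R_n)\le 2^pR_n^qf_{p,d}(n)^p\le M_q(R_n^-)$, and since $R_n\to\infty$ still forces $M_q(R_n^-)\to 0$, your final bound and conclusion go through unchanged.
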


\begin{proof}
We use a truncation argument to reduce to the case where $\rho$ is compactly supported.
Let $r>0$ be a truncation level to be chosen later and define the measure $\rho^{(r)}$ by
\begin{equation*}
\rho^{(r)}(dx):= \rho(dx)\mathbf{1}_{\mathcal{B}_{r}}(x) + \left(1-\rho(\mathcal{B}_{r})\right)\delta_{0}(dx).
\end{equation*}
By the canonical coupling, we have
\begin{equation*}
W_{p}(\rho,\rho^{(r)})^{p}\leq \int_{||x||>r} ||x||^{p} \rho(dx).
\end{equation*}
Yet, $\int_{||x||>r} ||x||^{p} \rho(dx)\leq C_{q}(r) r^{p-q}$ with $C_{q}(r):= \int_{||x||>r} ||x||^{q} \rho(dx)$ which goes to $0$ at $r\to +\infty$ by assumption. Without loss of generality one can replace $C_{q}(r)$ by some $C(r)$, satisfying $C(r)\geq 1/r$ and $\lim_{r\to +\infty} C(r)=0$, and write the upper-bound
\begin{equation}
W_{p}(\rho,\rho^{(r)})\leq C(r)^{1/p} r^{1-q/p}.
\end{equation}

By Theorem \ref{thm:rate:bounded:support}, for all $r\geq 0$, there exist empirical measures $\mu^{(n,r)}$ such that
\begin{equation*}
W_{p}(\mu^{(n,r)},\rho^{(r)})\leq 4r f_{p,d}(n).
\end{equation*}
By the triangular inequality,
\begin{equation*}
W_{p}(\mu^{(n,r)},\rho)\leq g(r) := C(r) r^{1-q/p} + 4r f_{p,d}(n).
\end{equation*}
To optimize $g(r)$, let us choose $\tilde{r}=\tilde{r}(n):=f_{p,d}(n)^{-p/q}$ since it satisfies $\tilde{r}^{1-q/p}=\tilde{r}f_{p,d}(n)$ and then consider $r(n):=C(\tilde{r})\tilde{r}$ to compute
\begin{equation*}
g(r(n))\leq C(C(\tilde{r})\tilde{r}) C(\tilde{r})^{1-q/p} f_{p,d}(n)^{1-p/q}+ 4 C(\tilde{r}) f_{p,d}(n)^{1-p/q}.
\end{equation*}
Finally, since $\lim_{n\to +\infty} \tilde{r}(n)=+\infty$ and $\lim_{r\to +\infty} C(r)=0$, we easily end the proof.
\end{proof}

\begin{corollary}
\label{cor:classification}
Assume that $N=cn$ with $c,n$ in $\mathbb{N}$. For any $x_{1},\dots,x_{N}$ in $\mathbb{R}^{d}$, there exist $C_{1},\dots,C_{n}$ disjoint subsets of indices of $\{1,\dots,N\}$ such that
\begin{itemize}
\item they form a uniform classification of $x_{1},\dots,x_{N}$, namely the cardinal $\card(C_{k})=c$ for all $k=1,\dots,n$;
\item each class is controlled, namely for all $k=1,\dots,n$,
\begin{equation*}
\diam(x_{(C_{k})})\leq 4rk^{-1/d},
\end{equation*}
where $x_{(C_{k})}:= \left\{ x_{i}, i\in C_{k} \right\}$ and $r=\max_{i=1,\dots,N}|x_{i}|$.
\end{itemize}
In particular, there exist $\overline{x}_{1},\dots,\overline{x}_{n}$ in $\mathbb{R}^{d}$ such that
\begin{equation}\label{eq:control:classification}
\frac{1}{N} \sum_{i=1}^{N} |x_{i}-\overline{x}_{k(i)}| \leq 4r f_{1,d}(n),
\end{equation}
where $k(i)\in \{1,\dots,n\}$ is such that $x_{i}\in C_{k(i)}$ and $f_{1,d}$ is given by Theorem \ref{thm:rate:bounded:support}.
\end{corollary}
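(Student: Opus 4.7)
My plan is to run the iterative construction of Theorem \ref{thm:decomposition} directly on the empirical measure $\rho := N^{-1}\sum_{i=1}^{N}\delta_{x_{i}}$, which is supported in $\mathcal{B}_{r}$ by the definition of $r$, but to modify each step so that one carves out an entire integer class of $c$ atoms rather than a possibly fractional reweighting. The enabling observation is that every atom of $\rho$ has mass exactly $1/N=1/(cn)$, so any Borel set of $\rho$-mass at least $1/n=c/N$ necessarily contains at least $c$ atoms of $\rho$.

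Concretely, I first apply Lemma \ref{lem:construction:cube} to $\rho$ with parameter $n$ to obtain $A_{n}\subset\mathcal{B}_{r}$ with $\rho(A_{n})\geq 1/n$ and $\diam(A_{n})\leq 4rn^{-1/d}$, and let $C_{n}$ consist of any $c$ indices $i$ with $x_{i}\in A_{n}$. Removing those atoms produces $\rho^{(1)}:=\rho-N^{-1}\sum_{i\in C_{n}}\delta_{x_{i}}$ of mass $(n-1)/n$, and reapplying Lemma \ref{lem:construction:cube} with parameter $n$ gives $A_{n-1}$ with $\diam(A_{n-1})\leq 4r(n\cdot(n-1)/n)^{-1/d}=4r(n-1)^{-1/d}$. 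Iterating $n$ times exhausts all atoms: at step $n-k+1$ the residual measure has mass $k/n$, so the lemma yields $A_{k}$ with $\diam(A_{k})\leq 4rk^{-1/d}$, from which I extract $c$ fresh indices to form $C_{k}$. The two bulleted items follow at once. Picking $\overline{x}_{k}$ to be any point of $A_{k}$ (for instance the center of the cube produced by Lemma \ref{lem:construction:cube}) gives $|x_{i}-\overline{x}_{k}|\leq \diam(A_{k})\leq 4rk^{-1/d}$ whenever $i\in C_{k}$, whence
\begin{equation*}
\frac{1}{N}\sum_{i=1}^{N}|x_{i}-\overline{x}_{k(i)}| \leq \frac{1}{N}\sum_{k=1}^{n}c\cdot 4rk^{-1/d} = \frac{4r}{n}\sum_{k=1}^{n}k^{-1/d}\leq 4r f_{1,d}(n),
\end{equation*}
the last step being the same arithmetic computation performed in the proof of Theorem \ref{thm:rate:bounded:support} with $p=1$ (only cases $(i)$ and $(ii)$ can arise since $p=1\leq d$).

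The main obstacle I anticipate is the bookkeeping at the iterative step: one must ensure that $A_{k}$ not only has the correct residual mass but also contains $c$ atoms that are still \emph{available}, i.e.\ not already assigned to a previously built class. This is precisely where the hypothesis $N=cn$ plays its role, as it guarantees that the residual measure remains a uniform sum of atoms of common mass $1/N$ throughout the whole process, so that the mass lower bound $\rho^{(n-k)}(A_{k})\geq c/N$ translates directly into a cardinality lower bound on the set of available indices inside $A_{k}$.
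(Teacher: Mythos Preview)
Your proposal is correct and follows precisely the approach the paper sketches: an iterative application of Lemma~\ref{lem:construction:cube} to the empirical measure, as in the proof of Theorem~\ref{thm:decomposition}, followed by the $p=1$ sum estimate from the proof of Theorem~\ref{thm:rate:bounded:support}. Your explicit observation that each atom has mass $1/N$, so the mass lower bound $\geq 1/n$ forces at least $c$ available indices in $A_{k}$, is exactly the detail the paper leaves implicit when it says the argument is ``similar'' and ``therefore omitted.''
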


\begin{proof}
The proof of the existence of the uniform classification $C_{1},\dots,C_{n}$ is based on an iterative application of Lemma \ref{lem:construction:cube} similar to the one developed in the proof of Theorem \ref{thm:decomposition} and is therefore omitted.

The proof of \eqref{eq:control:classification} is similar to the end of the proof of Theorem \ref{thm:rate:bounded:support}.
\end{proof}

\paragraph{Acknowledgments}
This research was supported by the project Labex MME-DII (ANR11-LBX-0023-01) and mainly conducted during the stay of the author at Universit\'e de Cergy-Pontoise.

\bibliographystyle{abbrv}
\bibliography{references}

\end{document}